\newcommand{\eqdef}{\stackrel{\scriptscriptstyle\rm def}{=}}
\newtheorem{theorem}{Theorem}
\newtheorem{remark}{Remark}
\newtheorem{proposition}{Proposition}
\newtheorem{lemma}{Lemma}
\def\vol{{\rm vol} }
\newcommand{\beha}{\begin{enumerate}}
\newcommand{\behe}{\end{enumerate}}
\renewcommand{\epsilon}{\varepsilon}
\newcommand{\cM}{\EuScript{M}}
\newcommand{\bR}{{\mathbb R}}
\newcommand{\bZ}{{\mathbb Z}}
\newcommand{\bN}{{\mathbb N}}
\newcommand{\cR}{{\mathcal R}}
\renewcommand{\det}{{\rm Jac\,}}
\def\1{1\!\!1}
\def\and{\text{ and }}
                        \def\^{\tilde}
\def\SPer{{\rm SPer}}
\def\SFix{{\rm SFix}}
\def\Fix{{\rm Fix}}
\def\1{1\!\!1}
\DeclareMathSymbol{\varnothing}{\mathord}{AMSb}{"3F}
\renewcommand{\emptyset}{\varnothing}
\begin{document}

\title[Equality of pressures]{Equality of pressures for diffeomorphisms preserving hyperbolic measures}
\begin{abstract}
For a diffeomorphism which preserves a hyperbolic measure the potential $\varphi^u=-\log|{\rm Jac\,} df|_{E^u}|$ is studied.  Various types of pressure of $\varphi^u$ are introduced. It is shown that these pressures  satisfy a corresponding variational principle.
\end{abstract}
\subjclass[2000]{37D25, 37D35, 28D20}
\author{Katrin  Gelfert}\address{Current address: 
Department of Mathematics, Northwestern University, 2033 Sheridan Road, Evanston, IL 60208-2730, USA
}\email{gelfert@pks.mpg.de} 
\urladdr{http://www.pks.mpg.de/$\sim$gelfert}
\keywords{thermodynamical formalism, nonuniformly hyperbolic systems}

\thanks{This research was supported by  the grant EU FP6 ToK SPADE2. The author is grateful to IM PAN Warsaw for the hospitality  and to C.~Wolf for discussions about suitable concepts of pressure. }
\maketitle

\section{Introduction}\label{secnoncon}

For a uniformly hyperbolic diffeomorphism $f$, the induced volume deformation $\varphi^u$ in the unstable subbundle over a compact $f$-invariant  set significantly characterizes the geometry of the set as well as the dynamics in its neighborhood. 
Under the hypothesis of uniform hyperbolicity, and particularly for hyperbolic surface diffeomorphisms, a large number of dynamical quantifiers such as, for example, fractal dimensions, Lyapunov exponents, and escape rates, are captured through the topological pressure of $\varphi^u$.
If  $f\colon M\to M$ is a $C^{1+\varepsilon}$ diffeomorphism on a Riemannian manifold $M$ (we assume that some Riemannian metric on $M$ is fixed) and  $\Lambda$
is an $f$-invariant locally maximal set such that $f|\Lambda$ is uniformly hyperbolic and satisfies specification (and hence is mixing), then the  topological pressure $P_{f|\Lambda}$ of  the function $\varphi^u(x)=-\log |\det df_x|_{E^u_x}|$  can be calculated through
\begin{multline}\label{vpvpvp}
P_{f|\Lambda}(\varphi^u) 
= \lim_{n\to\infty}\frac{1}{n}\log\sum_{f^n(x)=x}|\det df^n_x|_{E^u_x}|^{-1}\\
= \sup_{\mu\in\cM_{\rm E}}\left(h_\mu(f)-\int_\Lambda\log| \det df|_{E^u}| d\mu\right)
\end{multline}
(see~\cite{Bow:75} or~\cite{KatHas:95}).
Here the second equality (with the supremum taken over all ergodic $f$-invariant measures supported in $\Lambda$) simply follows from the variational principle by continuity of the function $\log|\det df|_{E^u}|\colon\Lambda\to\bR$. Note that the supremum in~(\ref{vpvpvp}) is in fact a maximum (see~\cite{Bow:75}). The measure which realizes this maximum is of unique importance from several different points of view. A particular case is given when the maximizing measure is the SRB (after Sinai Ruelle Bowen) measure of $f$, each of the terms in~(\ref{vpvpvp}) is zero, and $\Lambda$ is an attractor  (see~\cite{You:02} for further details and references). 

In the case of more general dynamical systems, the above quantities are likewise  important, in particular for the issue of existence of SRB measures. Note that the classical thermodynamic formalism, however, requires the potential to be  continuous. We point out that for a non-uniformly hyperbolic system (a system with non-zero integrated Lyapunov exponents) it is natural to consider potentials which are discontinuous: no continuous $df$-invariant subbundle $E^u\subset T_\Lambda M$ may exist and $x\mapsto-\log| \det df_x|_{E^u_x}|$ is in general only a measurable function.  In this paper we study appropriate modifications of any of the terms in~(\ref{vpvpvp}) by exploiting techniques which were developed by Katok~\cite{Kat:80} and Mendoza~\cite{Men:88} for dynamical systems with some non-uniformly hyperbolic behavior. We combine them with an approach of a non-additive version of the thermodynamic formalism, developed by~\cite{Pes:97,Fal:88,Bar:96} in particular for non-conformal systems. 

It is meaningful to consider a function $\varphi^u\eqdef-\log| \det df|_{E^u}|$ which is defined only on a certain subset of $\Lambda$. Pesin~\cite{Pes:97} developed an extension of the classical topological pressure to a pressure on sets which are not necessarily compact nor invariant, but his approach requires the potential functions to be continuous. Mummert~\cite{Mum:} discusses for example a pressure for non-continuous potentials and provides a meaningful generalization of $P_{f|\Lambda}(\varphi^u)$ in the case that $f$ preserves a hyperbolic measure. 
In~\cite{GelWol:} the so-called \emph{saddle point pressure} $P_{f,\rm SP}(\varphi^u)$ is introduced, which is entirely determined by the values of $\varphi^u$ on the periodic points of saddle type (see Section~\ref{sec:1} for the definition), and which generalizes the second term in~(\ref{vpvpvp}) in the case that such periodic points do exist. 
Notice that, by the multiplicative ergodic theorem, $h_\mu(f)+\int_\Lambda\varphi^ud\mu$ is well-defined for any ergodic $f$-invariant probability measures with a positive Lyapunov exponent.

The main result of this paper is to show that the equalities~(\ref{vpvpvp}) extend to more general maps, including $C^{1+\varepsilon}$ diffeomorphisms possessing hyperbolic invariant probability measures. Here we call an ergodic measure \emph{hyperbolic} or say it is  of \emph{saddle type} if it possesses at least one negative and one positive, and no zero Lyapunov exponents.
We say that $f$ is \emph{non-uniformly hyperbolic}  if every ergodic $f$-invariant measure is hyperbolic.

Paradigms of genuinely non-uniformly hyperbolic diffeomorphisms are given, for example, within the family of H\'enon maps (see~\cite{CaoLuzRio:}). 
Another perhaps simplest example is  provided by the figure-8 attractor (composed of homoclinic loops joining a fixed point of saddle type, see for example~\cite[p.~140]{Kat:80}). Even though here the maximal invariant set which is formed by the loops does not support any other invariant probability measures besides the Dirac measure supported at the saddle fixed point, it provides a basic plug for more sophisticated models. A similar attractor is used for example in~\cite{BalBonSch:99} where  it is inserted  by a smooth surgery into a uniformly hyperbolic set. The resulting compact set $\Lambda$ is invariant and locally maximal under a smooth diffeomorphism $f$ on a surface and $\Lambda$ is the support of a 
measure $\mu$ satisfying
\[
h_\mu(f)  + \int\varphi^ud\mu = 
\max_\nu\left( h_\nu(f)  + \int\varphi^ud\nu\right) <0,
\] 
where the maximum is taken over all ergodic $f$-invariant probability measures supported in $\Lambda$. 
Moreover, any ergodic measure supported in $\Lambda$ is hyperbolic, and hence $f|\Lambda$ is non-uniformly hyperbolic  in the sense introduced above, but not uniformly hyperbolic. 

In the following we always assume that $f\colon M\to M$ is a $C^{1+\varepsilon}$ diffeomorphism preserving a hyperbolic Borel probability measure supported on a  compact locally maximal $f$-invariant set $\Lambda\subset M$. Here by locally maximal we mean that there exists an open neighborhood $U\subset M$ of $\Lambda$ such that $\Lambda=\bigcap_{n\in\bZ}f^n(U)$. 
We denote by $\cM$ the set of all Borel $f$-invariant probability measures on
$\Lambda$, endowed with weak$*$ topology, and by $\cM_{\rm E}\subset \cM$ the subset of ergodic measures.

\begin{theorem}\label{Main}
	Let $f\colon M\to M$ be a $C^{1+\varepsilon}$ diffeomorphism and let $\Lambda\subset M$ be a compact locally maximal $f$-invariant set such that there exists a hyperbolic $f$-invariant Borel probability measure supported on $\Lambda$. Then
\begin{equation}\label{main}
\sup_{K\subset\Lambda}P_{f|K}(\varphi^u)
= P_{f,\rm SP}(\varphi^u)
= \sup_{\nu\in\cM_{\rm E}(f|\Lambda)\,\,{\rm hyperbolic}}
	\left(h_\nu(f)+\int_\Lambda\varphi^ud\nu\right)	,
\end{equation}
with the first supremum taken over all compact $f$-invariant hyperbolic sets $K\subset \Lambda$. 
\end{theorem}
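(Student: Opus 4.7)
The plan is to prove Theorem~\ref{Main} via a cycle of inequalities linking the three quantities, relying on (i)~the Katok--Mendoza horseshoe approximation for hyperbolic ergodic measures, (ii)~Bowen's expression of topological pressure as a growth rate of weighted periodic sums on uniformly hyperbolic sets, and (iii)~a Pesin-block stratification of the saddle periodic points.

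For the inequality $\sup_\nu (h_\nu(f)+\int\varphi^u d\nu) \le \sup_{K} P_{f|K}(\varphi^u)$, I would fix a hyperbolic ergodic $\nu\in\cM_{\rm E}$ and $\e>0$ and invoke the Katok--Mendoza horseshoe theorem (as used in \cite{Kat:80,Men:88}) to produce a compact $f$-invariant uniformly hyperbolic set $K=K(\nu,\e)\subset\Lambda$ such that $h_{\rm top}(f|K)\ge h_\nu(f)-\e$ and the $\varphi^u$-values on $K$ are within $\e$ of $\int\varphi^u d\nu$ (the latter using that the Lyapunov data of the horseshoe can be pushed close to that of $\nu$). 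Since $\varphi^u$ is continuous on the uniformly hyperbolic set $K$, the classical variational principle applied to $f|K$ yields
\[ P_{f|K}(\varphi^u) \;\ge\; h_{\rm top}(f|K) + \min_{x\in K}\varphi^u(x) \;\ge\; h_\nu(f)+\int\varphi^u d\nu - C\e, \]
and letting $\e\downarrow 0$ gives the desired estimate.

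For $\sup_{K}P_{f|K}(\varphi^u)\le P_{f,{\rm SP}}(\varphi^u)$, I would reduce to a basic set decomposition: a compact uniformly hyperbolic $f$-invariant $K\subset\Lambda$ splits into finitely many transitive locally maximal pieces on each of which the specification property holds. On each such basic set the analog of \eqref{vpvpvp} holds, so $P_{f|K}(\varphi^u)=\lim_n\frac1n\log\sum_{p\in K\cap\Fix(f^n)}|\det df^n_p|_{E^u_p}|^{-1}$; every such $p$ is a saddle periodic point of $f|\Lambda$, so this quantity is bounded above by $P_{f,{\rm SP}}(\varphi^u)$ as defined in~\cite{GelWol:}.

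The remaining inequality $P_{f,{\rm SP}}(\varphi^u)\le\sup_\nu(h_\nu(f)+\int\varphi^u d\nu)$ is where the main difficulty lies, because $\varphi^u$ is only measurable on $\Lambda$ and the saddle periodic points need not lie in a single uniformly hyperbolic subset. The idea is to stratify the family of saddle periodic points by Pesin blocks $\Lambda_\ell$ (the level sets where the hyperbolic splitting and its angles are quantitatively controlled by $\ell$). For a subsequence $n_k\to\infty$ realizing the $\limsup$ defining $P_{f,{\rm SP}}$, a pigeonhole argument selects a single block $\Lambda_\ell$ carrying exponentially many of the contributing periodic orbits. Applying Katok's construction to the closure of this family of orbits yields uniformly hyperbolic horseshoes whose topological pressures capture the block's contribution, and the associated equilibrium measures (or weak-$*$ limits of normalized periodic empirical measures, together with their ergodic decomposition) produce a hyperbolic ergodic $\mu$ with $h_\mu(f)+\int\varphi^u d\mu\ge P_{f,{\rm SP}}(\varphi^u)-\e$.

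The main obstacle is this last step: because $\varphi^u$ is discontinuous on $\Lambda$, one cannot directly pass to weak-$*$ limits to read off $\int\varphi^u d\mu$, and the usual upper semicontinuity of entropy fails. Localizing to Pesin blocks restores enough regularity ($\varphi^u$ becomes continuous, splittings vary continuously, and Brin--Katok-type formulas are available) to push the argument through, but verifying that enough mass of the approximating empirical measures concentrates on a fixed block, and that the resulting measure remains ergodic and hyperbolic, is the technical heart of the proof.
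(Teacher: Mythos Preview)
Your overall architecture---Katok horseshoe approximation, Bowen's periodic-point formula, and a stratified analysis of saddle periodic points---is exactly the paper's approach, but there are two differences worth flagging.

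First, in your step $\sup_K P_{f|K}(\varphi^u)\le P_{f,\rm SP}(\varphi^u)$ you claim that an arbitrary compact $f$-invariant hyperbolic $K\subset\Lambda$ ``splits into finitely many transitive locally maximal pieces on each of which the specification property holds.'' This is false in general: a compact hyperbolic set need not be locally maximal and may contain no periodic points at all (think of a minimal subshift inside a horseshoe). The paper avoids this by never considering arbitrary $K$: it works only with the specific Katok--Mendoza horseshoes $K_n(\mu)$, which are by construction conjugate to full shifts, and uses the inclusion $\Fix(f^k)\cap K_n(\mu)\subset\SFix_{(f,\alpha,c_0)}(f^k)$ to feed their periodic points directly into the definition of $P_{f,\rm SP}$. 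Your inequality is still true, but the clean way to get it is simply the variational principle on $K$: every ergodic measure on a uniformly hyperbolic $K$ is a hyperbolic measure on $\Lambda$, so $P_{f|K}(\varphi^u)\le\sup_\nu(h_\nu+\int\varphi^u\,d\nu)$; this makes $\sup_K\le\sup_\nu$ trivial, and the cycle closes through $P_{f,\rm SP}$ instead.

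Second, for the hard inequality $P_{f,\rm SP}(\varphi^u)\le\sup_\nu(h_\nu(f)+\int\varphi^u d\nu)$ the paper does not argue at all: it simply invokes \cite[Theorem~4]{GelWol:}, which already gives $\lim_{c\to0}P_{f,\rm SP}(\varphi^u,\alpha,c)\le\sup\{h_\nu+\int\varphi^u d\nu:\nu\ \text{hyperbolic},\ \chi(\nu)\ge\alpha\}$. Your sketch is heading toward a re-proof of that result, but the ``pigeonhole over Pesin blocks'' is unnecessary: the definition of $P_{f,\rm SP}$ already fixes $(\alpha,c)$ \emph{before} taking the $\limsup$ in $n$, so the contributing periodic orbits at each stage already sit in a single uniform block $\SFix_{(f,\alpha,c)}$, and one passes to weak-$\ast$ limits of their empirical measures at fixed $(\alpha,c)$ (where $\varphi^u$ behaves well), only afterwards letting $c\to0$, $\alpha\to0$. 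The obstacle you identify---controlling hyperbolicity and the $\varphi^u$-integral of the limit measure---is real, and is precisely what \cite{GelWol:} handles.
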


Note that by the Ruelle inequality~(\ref{main}) is always non-positive. By Young~\cite[Theorem 4(1)]{You:90},~(\ref{main}) is bounded from above by the escape rate of volume from a small open neighborhood of $\Lambda$, which in turn is negative only if there is a certain amount of repulsion in $\Lambda$ and zero if $\Lambda$ is attracting. However, as indicated above,~(\ref{main}) can be negative even though that $\Lambda$ is an attractor. If~(\ref{main}) is zero and there exists a maximizing measure $\mu$ with $h_\mu(f)>0$, then  $\mu$ is a SRB measure, and conversely if there exists a SRB measure $\mu$, then by the Pesin formula for the entropy
\[
h_\mu(f) = -\int_\Lambda\varphi^ud\nu
= \lim_{n\to\infty}\int_\Lambda\log||( df^n)^\wedge||^{1/n}\,d\mu
\]
(again we refer to~\cite{You:02} for details and references).

Here, in the previous formula the last integral, $x\mapsto(df^n_x)^\wedge$ is a map between the full exterior algebras of the tangent spaces $T_xM$ and $T_{f^n(x)}M$, induced by $df^n_x$, and $||\cdot||$ is the operator norm induced by the Riemannian metric. In geometric terms $||(df^n_x)^\wedge||$ measures the maximum of volumes of images under $df^n_x$ of an arbitrarily $k$-dimensional cube, $1\le k\le\dim M$, of volume $1$. Notice that for fixed $n$
\begin{equation}\label{chi}
\varphi_n(x)\eqdef -\log|| (df^n_x)^\wedge ||
\end{equation}
is a H\"older continuous functions. Turning to the pressure of that \emph{continuous} potential, we can formulate the following result.  

\begin{theorem}\label{theorem:heu}
	Let $f\colon M\to M$ be a $C^{1+\varepsilon}$ diffeomorphism and let $\Lambda\subset M$ be a compact locally maximal $f$-invariant set such that every $f$-invariant Borel probability measure supported on $\Lambda$ possesses a positive Lyapunov exponent. Then
	\begin{equation}\label{eq:utw}
	\sup_{\mu\in\cM(f|\Lambda)}\left( h_\mu(f) + \int_\Lambda\varphi^ud\mu\right) 
	=  \sup_{n\ge 1}P_{f|\Lambda}\left(-\log||( df^n)^\wedge||^{1/n}\right).
\end{equation}
\end{theorem}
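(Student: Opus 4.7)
The plan is to combine the super-additive cocycle structure of $\{\varphi_n\}$ with Oseledec's multiplicative ergodic theorem, Fekete's lemma, and the classical variational principle for the H\"older continuous potentials $\varphi_n/n$. The positive-Lyapunov-exponent hypothesis enters only to ensure that $\varphi^u$ is well-defined and that $\int_\Lambda\varphi^u\,d\mu$ is finite on every $\mu\in\cM(f|\Lambda)$.

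Submultiplicativity of the operator norm on the exterior algebra, applied via $(df^{n+m}_x)^\wedge=(df^n_{f^m x})^\wedge\circ(df^m_x)^\wedge$, yields
\[
\varphi_{n+m}(x)\geq\varphi_n(f^m x)+\varphi_m(x),\qquad n,m\geq 1,\ x\in\Lambda.
\]
Consequently $a_n\eqdef\int\varphi_n\,d\mu$ is super-additive in $n$ for every $\mu\in\cM(f|\Lambda)$, so by Fekete's lemma $\lim_n a_n/n=\sup_n a_n/n$, and in particular $\tfrac{1}{n}\int\varphi_n\,d\mu\leq\lim_k\tfrac{1}{k}\int\varphi_k\,d\mu$ for every $n$. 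For ergodic $\mu$, the superadditive version of Kingman's ergodic theorem gives a.e.\ and $L^1(\mu)$ convergence of $\tfrac{1}{n}\varphi_n$ to a constant, which Oseledec's theorem identifies with $-\sum_{\lambda_i(\mu)>0}\lambda_i(\mu)=\int\varphi^u\,d\mu$. Ergodic decomposition (affinity of both sides in $\mu$) then extends $\lim_n\tfrac{1}{n}\int\varphi_n\,d\mu=\int\varphi^u\,d\mu$ to all $\mu\in\cM(f|\Lambda)$.

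Since $\varphi_n/n$ is H\"older continuous on $M$, the classical variational principle applies on $\Lambda$ and yields
\[
P_{f|\Lambda}(\varphi_n/n)=\sup_{\mu\in\cM(f|\Lambda)}\Bigl(h_\mu(f)+\tfrac{1}{n}\int\varphi_n\,d\mu\Bigr),\qquad n\geq 1.
\]
Both inequalities of~\RE{eq:utw} now fall out. For $\leq$: for any $\mu\in\cM(f|\Lambda)$,
\[
h_\mu(f)+\int\varphi^u\,d\mu=\sup_n\Bigl(h_\mu(f)+\tfrac{1}{n}\int\varphi_n\,d\mu\Bigr)\leq\sup_n P_{f|\Lambda}(\varphi_n/n).
\]
For $\geq$: fix $n$ and $\varepsilon>0$, pick $\mu_n\in\cM(f|\Lambda)$ with $h_{\mu_n}(f)+\tfrac{1}{n}\int\varphi_n\,d\mu_n\geq P_{f|\Lambda}(\varphi_n/n)-\varepsilon$, and use the super-additive bound $\tfrac{1}{n}\int\varphi_n\,d\mu_n\leq\int\varphi^u\,d\mu_n$ to upgrade this into the left-hand side of~\RE{eq:utw}.

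The only delicate point is the identification step: the a.e.\ limit produced by the superadditive ergodic theorem must be matched with the pointwise Oseledec limit, and the resulting identity must survive ergodic decomposition. Both are standard once $\int\varphi^u\,d\mu$ is known to be finite, which follows from the positivity hypothesis and the a priori bound $|\varphi^u|\leq(\dim M)\cdot\sup_{x\in\Lambda}\log\max(\|df_x\|,\|df_x^{-1}\|)$.
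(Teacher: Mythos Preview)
Your proof is correct and follows essentially the same route as the paper's: both arguments rest on the super-additivity of $(\varphi_n)_n$, the identification $\sup_n\tfrac{1}{n}\int\varphi_n\,d\mu=\lim_n\tfrac{1}{n}\int\varphi_n\,d\mu=\int\varphi^u\,d\mu$ via the multiplicative ergodic theorem, and the classical variational principle applied to each continuous potential $\varphi_n/n$, with the two inequalities obtained exactly as you do. The only cosmetic difference is that the paper works directly with ergodic measures and appeals to affinity afterward, whereas you pass through ergodic decomposition explicitly.
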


Note that the right hand side of~(\ref{eq:utw}) is independent of the Riemannian metric.
Further it is immaterial whether in the left hand side in~(\ref{eq:utw}) we take the supremum over measures in $\cM$ or in $\cM_{\rm E}$ since the entropy $h_\mu(f)$ and the integral both are affine functions of $\mu$.
We remark that for any example of a diffeomorphism with a dominated splitting, and in particular for any partially hyperbolic diffeomorphism, which has a uniformly expanding subbundle satisfies the assumptions in Theorem~\ref{theorem:heu}.

We introduce now a super-additive version $P_{f,\rm SP}(\Phi)$ of pressure of the \emph{sequence} $\Phi\eqdef (\varphi_n)_n$ of the H\"older continuous functions given by~(\ref{chi}) (see Section~\ref{sec:nonadd} for the definition of the pressure).

\begin{theorem}\label{theorem:2}
	Let $f\colon M\to M$ be a non-uniformly hyperbolic $C^{1+\varepsilon}$ diffeomorphism and let $\Lambda\subset M$ be a compact locally maximal $f$-invariant set. Then
	\begin{equation}\label{eq:www}
	P_{f,\rm SP}(\Phi)  
	= \sup_{n\ge 1}P_{f,\rm SP}\left(-\log||( df^n)^\wedge ||^{1/n}\right)
	 = 	\sup_{n\ge 1}P_{f|\Lambda}\left(-\log||( df^n)^\wedge||^{1/n}\right).
\end{equation}
\end{theorem}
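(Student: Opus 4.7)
The plan is to identify all three quantities in~\RE{eq:www} with the common variational expression
\[
V := \sup_{\mu \in \cM_{\rm E}\,\,\text{hyperbolic}}\bigl(h_\mu(f) + \textstyle\int \varphi^u\,d\mu\bigr),
\]
which by Theorem~\ref{Main} equals $P_{f,\rm SP}(\varphi^u)$. The crucial ergodic input is that for any ergodic hyperbolic $\mu$, the multiplicative ergodic theorem combined with the uniform bound $|\psi_n| \le \log\|df\|_\infty$ (writing $\psi_n := -\log\|(df^n)^\wedge\|^{1/n}$) gives $\int \psi_n\,d\mu \to \int \varphi^u\,d\mu$; moreover, super-additivity of $\Phi$ together with $f$-invariance of $\mu$ makes $(\int \varphi_n\,d\mu)_n$ super-additive in $n$, so Fekete's lemma upgrades this to the monotone identity $\sup_n \int \psi_n\,d\mu = \lim_n \int \psi_n\,d\mu = \int \varphi^u\,d\mu$.

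For the second equality in~\RE{eq:www}, I would fix $n$ and prove $P_{f,\rm SP}(\psi_n) = P_{f|\Lambda}(\psi_n)$. Since $\psi_n$ is H\"older continuous, the classical variational principle gives $P_{f|\Lambda}(\psi_n) = \sup_{\nu \in \cM_{\rm E}}(h_\nu + \int \psi_n\,d\nu)$, and non-uniform hyperbolicity restricts $\cM_{\rm E}$ to hyperbolic measures. The Katok--Mendoza approximation-by-hyperbolic-sets argument that underlies Theorem~\ref{Main} applies verbatim to any continuous potential; applied to $\psi_n$ it yields $P_{f,\rm SP}(\psi_n) = \sup_{\nu \in \cM_{\rm E}\,\,\text{hyp.}}(h_\nu + \int \psi_n\,d\nu)$ as well, so the two pressures agree. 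Taking $\sup_n$ and interchanging it with $\sup_\nu$ then gives
\[
\sup_n P_{f,\rm SP}(\psi_n) = \sup_n P_{f|\Lambda}(\psi_n) = \sup_{\nu \in \cM_{\rm E}\,\,\text{hyp.}}\bigl(h_\nu + \textstyle\int \varphi^u\,d\nu\bigr) = V,
\]
where the last step uses the monotone identity of paragraph~1.

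It remains to show $P_{f,\rm SP}(\Phi) = V = P_{f,\rm SP}(\varphi^u)$, which is the main obstacle since $\Phi$ is a genuinely non-additive object with no continuous invariant unstable direction available on all of $\Lambda$. The plan is to exhaust $\Lambda$ by uniformly hyperbolic invariant subsets $K$ via the same Katok--Mendoza construction: on each such $K$ the unstable bundle $E^u|K$ is continuous, and by bounded distortion (uniform hyperbolicity on $K$) the operator norm $\|(df^n_x)^\wedge\|$ is comparable to $|\det df^n_x|_{E^u_x}| = e^{S_n \varphi^u(x)}$ up to a multiplicative constant independent of $n$ and $x \in K$; equivalently, $\varphi_n|K$ agrees with $S_n \varphi^u|K$ up to a uniformly bounded additive error. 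This yields the additive-to-non-additive identity $P_{f|K}(\Phi) = P_{f|K}(\varphi^u)$ on each $K$, and passing to $\sup_K$ produces $P_{f,\rm SP}(\Phi) = \sup_K P_{f|K}(\varphi^u) = P_{f,\rm SP}(\varphi^u)$ by the first equality in~\RE{main} of Theorem~\ref{Main}. The delicate technical point is that the comparison between $\varphi_n$ and $S_n \varphi^u$ is only valid on the uniformly hyperbolic subsets, so the full strength of the hyperbolic-set exhaustion from Theorem~\ref{Main} is needed to exchange the sequence pressure of $\Phi$ for the classical pressure of the (possibly discontinuous) potential $\varphi^u$.
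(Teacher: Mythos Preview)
Your overall strategy---identifying all three quantities with the variational expression $V$---coincides with the paper's, and your treatment of the second equality in~\RE{eq:www} is essentially the paper's argument (Proposition~\ref{lem:main} plus Theorem~\ref{theorem:heu}).

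For the identity $P_{f,\rm SP}(\Phi)=V$, however, your third paragraph contains a gap. Your bounded-distortion argument on a uniformly hyperbolic $K$ is correct and yields $P_{f|K}(\varphi^u)\le P_{f,\rm SP}(\Phi)$ (periodic points in $K$ lie in some $\SFix_{(f,\alpha,c)}(f^k)$, and on $K$ the weight $e^{S_k\varphi^u(x)}$ is uniformly comparable to $e^{\varphi_k(x)}$). Taking $\sup_K$ and invoking Theorem~\ref{Main} then gives the lower bound $V\le P_{f,\rm SP}(\Phi)$; this route is in fact \emph{simpler} than the paper's, which obtains the same inequality via Proposition~\ref{his} (a contradiction argument working through measures in $\cM_{\rm E}^\infty$ and the comparison of Proposition~\ref{lem:gh}). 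What your argument does \emph{not} give is the reverse inequality $P_{f,\rm SP}(\Phi)\le \sup_K P_{f|K}(\Phi)$: the saddle points occurring in $Q_{f,\rm SP}(\Phi,\alpha,c,n)$ need not lie on a single compact hyperbolic set, so the distortion constant is not uniform over them, and ``passing to $\sup_K$'' is unjustified in that direction.

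The fix is a one-line observation you overlooked (the paper's Proposition~\ref{gigi}): for \emph{every} saddle periodic point $x$ one has $\varphi_n(x)\le S_n\varphi^u(x)$, simply because the unstable subspace $E^u_x$ is one competitor in the supremum defining $\|(df^n_x)^\wedge\|$. This immediately yields $P_{f,\rm SP}(\Phi)\le P_{f,\rm SP}(\varphi^u)=V$, with no distortion estimate needed. With this addition your proof is complete, and for the nontrivial lower bound it is a genuinely different and more elementary argument than the paper's.
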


Note that, by non-uniform hyperbolicity, the terms in~(\ref{eq:www}) are also equal to any of the terms in~(\ref{main}) and~(\ref{eq:utw}).
At the end of Section~\ref{sec:nonadd} we will discuss hypotheses, which are slightly weaker than non-uniform hyperbolicity, under which we can prove~(\ref{eq:www}) in the case of a surface diffeomorphism.

\begin{remark}{\rm
In~\cite{Bar:96}  the non-additive topological pressure $P_{f|\Lambda}(\Psi)$ of a general sequence $\Psi$ of continuous functions with respect to $f|\Lambda$ is studied and a non-additive version of the variational principle of the classical thermodynamic formalism is established. Applying this approach to the above setting, we can conclude that  
\[
\sup_{\nu\in\cM_{\rm E}(f|\Lambda) \,\,{\rm hyperbolic}}
	\left(h_\nu(f)+\int_\Lambda\varphi^ud\nu\right) \le
P_{f|\Lambda}(\Phi),
\] 
with the sequence $\Phi$ defined in~(\ref{chi})
(see~\cite[Section~1.3]{Bar:96} or~\cite{Pes:97}). A variational principle for $P_{f|\Lambda}(\Psi)$ in terms of an equality has been established in~\cite{Bar:96}, however only for a rather restrictive class of sequences $\Psi=(\psi_n)_n$ of continuous functions, to which $\Phi$ in general does not belong if $f|\Lambda$ is not hyperbolic. 
}\end{remark}

We now sketch the contents of the paper. In Section~\ref{sec:1} we review some concepts from smooth ergodic theory. 
 We also recall the definition of the pressure $P_{f,\rm SP}(\varphi^u)$ given in~\cite{GelWol:} and we introduce the pressure $P_{f,\rm SP}(\Psi)$ of a super-additive function sequence $\Psi$.  In Section~\ref{sec:nonadd}  we derive several properties of $P_{f,\rm SP}(\Psi)$ and prove  Theorems~\ref{theorem:heu} and~\ref{theorem:2}. The proof of Theorem~\ref{Main} is given in Section~\ref{sec:2}.

\section{Various types of pressure}\label{sec:1}

We first review some concepts from smooth ergodic theory and fix some notation.

\subsection{Notions from smooth ergodic theory}

Given a point $x\in\Lambda$ which is Lyapunov regular with respect to $f$ (see for example~\cite{Man:} for the definition and details on Lyapunov regularity),  there exist a positive integer $s(x)\leq \dim M$, numbers $\lambda_1(x)<\cdots<\lambda_{s(x)}(x)$, and a $df$-invariant splitting
\[
T_xM = \bigoplus_{i=1}^{s(x)}E^i_x
\]
such that for all $i=1,\ldots,s(x)$ and $v\in E^i_x\setminus \{0\}$ we have
\[
\lim_{n\to\pm\infty}\frac{1}{n}\log|| df^n_x(v)||
 =\lambda_i(x).
\]
We will count the values of the Lyapunov exponents $\lambda_i(x)$ with their multiplicity, i.e. we consider the numbers $\lambda_1(x)\le\cdots\le\lambda_{{\rm dim} M}(x)$.

By the Oseledets multiplicative ergodic theorem, given $\mu\in \cM$ the set of Lyapunov regular points has full measure and
$\lambda_i(\cdot)$ is $\mu$-measurable. We denote by
\begin{equation}\label{deflyame}
\lambda_i(\mu)\eqdef\int\lambda_i(x) d\mu(x).
\end{equation}
the Lyapunov exponents of the measure $\mu$. In particular, if for $\mu\in\cM_{\rm E}$ there is $1\le \ell=\ell(\mu)<\dim M$ such that
\[
\lambda_\ell(\mu)<0<\lambda_{\ell+1}(\mu),
\]
we say that $\mu$ is \emph{hyperbolic} or of \emph{saddle type}. 
In the following we always assume that there exists an $f$-invariant ergodic   Borel probability measure which is hyperbolic.

Define
\[
\chi(\mu)\eqdef \min_i\{| \lambda_i(\mu)|\}.
\]
For a Lyapunov regular point $x\in\Lambda$ let us denote through $E^u_x$ ($E^s_x$) the span of the subspaces of $T_xM$ that correspond to a positive Lyapunov exponent (a negative Lyapunov exponent) and let
\[
\varphi^u(x) \eqdef -\log| \det df_x|_{E^u_x}|.
\] 
If the subspace $E^u_x$ is empty, it is convenient to set $\varphi^u(x)\eqdef 0$.

We call a compact  $f$-invariant set $K\subset M$ \emph{hyperbolic} if there
exists a continuous $df$-invariant splitting of the tangent bundle $T_KM = E^u\oplus E^s$ and constants $c>0$ and $\lambda\in(0,1)$ such that
for every $x\in K$ and every $k\in\bN$
\[
|| df^k_x(v)|| \le c\lambda^k || v||\,\,{\rm for}\,\,{\rm all }\,\,v\in E^s_x,
|| df^{-k}_x(w)|| \le c\lambda^k || w||\,\,{\rm for}\,\,{\rm all }\,\,w\in E^u_x .
\]
The diffeomorphism $f$ is said to be \emph{uniformly hyperbolic} or \emph{Axiom A} if the periodic orbits are dense in the non-wandering set $\Omega(f)$ and if $f|\Omega(f)$ is hyperbolic. We call the diffeomorphism $f$ \emph{non-uniformly hyperbolic} if every  $f$-invariant  ergodic Borel probability measure is hyperbolic.

\subsection{The saddle point pressure}

We denote by $\cR$ the set of  Lyapunov regular points in $\Lambda$. We note that the subbundle  \( E^u\subset T_{\cR}M\) cannot always be extended  continuously  to $T_\Lambda M$, and in general there is no continuous function $\varphi$ with $\varphi|\cR = \varphi^u$.

We denote by $\SFix(f^n)$ the fixed points of $f^n$ which are saddle points, and by
$\SPer(f)=\bigcup_{n\ge 1} \SFix(f^n)$ the set of all saddle points.
We introduce a filtration of subsets of $\SPer(f)$ which are ordered according to the ``strength of hyperbolicity" of the saddle points. 
For $0<\alpha$, $0<c\leq 1$, and $n\in\bN$ we set
\begin{multline*}
\SFix_{(f,\alpha,c)}(f^n)\eqdef\\ 
\big\{ x\in\SFix(f^n)\colon
 \,\,{\rm for}\,\,{\rm all }\,\, k\ge 1, v\in E^s_{f^\ell(x)}, w\in E^u_{f^\ell(x)} ,0\le \ell\le n-1 \\
\quad || df^{-k}_{f^\ell(x)}(v)||\ge c e^{k\alpha}|| v||,\,\,
|| df^k_{f^\ell(x)}(w)|| \ge c e^{k\alpha}|| w|| \big\}.
\end{multline*}
If  $\alpha\geq \alpha',\,c\geq c'$, then
\begin{equation}\label{ni}
\SFix_{(f,\alpha,c)}(f^n) \subset \SFix_{(f,\alpha',c')}(f^n).
\end{equation}
Further
\[
\SPer(f)=\bigcup_{\alpha>0}\bigcup_{c>0}\bigcup_{n=1}^\infty
\SFix_{(f,\alpha,c)}(f^n).
\]
Note that in the above definition we use the sub-script $\SFix_{(f,\cdot,\cdot)}$ in order to emphasize with respect to which map $f$ the strength of hyperbolicity is considered, because in the following we will take into consideration also iterates of the diffeomorphism $f$.
Notice that for every $m\ge 1$ we have 
\begin{equation}\label{kubu}
\SFix_{(f^n,n\alpha,c\cdot d(n))}(f^{mn}) \subset
\SFix_{(f,\alpha,c)}(f^{mn}) \subset
\SFix_{(f^n,n\alpha,c)}(f^{mn}),
\end{equation}
where 
\begin{equation}\label{kubus}
d(n)\eqdef  
\min_{1\le \ell\le n-1}\left(e^{\ell\alpha} 
\min_{y\in\Lambda}\left\{ ||(df_y^\ell)^{-1}||, ||df_y^\ell ||\right\}\right).
\end{equation}

Clearly, every periodic point is Lyapunov regular,
and thus, $\varphi^u(x)$ is well-defined for every $x\in \SPer(f)$.
Let $0<\alpha$ and $0<c\leq 1$.
Define
\[
  Q_{f,\rm SP}(\varphi^u,\alpha,c,n) \eqdef
  \sum_{x\in \SFix_{(f,\alpha,c)}(f^n)} \exp\left( S_n\varphi^u(x)\right)
\]
if $\SFix_{(f,\alpha,c)}(f^n)\ne\emptyset$, where we use the notations $S_n\psi(x)\eqdef\psi(x)+\cdots +\psi(f^{n-1}(x))$, and
\[
  Q_{f,\rm SP}(\varphi^u,\alpha,c,n)\eqdef
  \exp\left(n\inf_{x\in\cR} \varphi^u(x)\right)
\]
otherwise. We define
\[
  P_{f,\rm SP} (\varphi^u,\alpha,c) \eqdef
  \limsup_{n\to\infty}\frac{1}{n}\log Q_{f,\rm SP}(\varphi^u,\alpha,c,n).
\]
It follows that if $\SFix_{(f,\alpha,c)}(f^n)\not=\emptyset$ for some $n\in\bN$ then
$P_{f,\rm SP} (\varphi^u,\alpha,c)$ is entirely determined by  the values of
$\varphi^u$ on the saddle points of $f$. Denote 
\[
P_{f,\rm SP} (\varphi^u)\eqdef
 \lim_{\alpha\to0}\lim_{c\to0}P_{f,\rm SP} (\varphi^u,\alpha,c).
\]

\subsection{The super-additive saddle point pressure}

We say that a sequence $\Psi=(\psi_n)_n$ of functions $\psi_n\colon M\to\bR$ is \emph{super-additive} (with respect to $f$) if for every $n$, $m\in\bN$ and $x\in M$ we have
\[
  \psi_n(x)+\psi_m(f^n(x))\le \psi_{n+m}(x) .
\]
For  a super-additive sequence $\Psi=(\psi_n)_n$ of continuous functions and for numbers  $n\in\bN$, $0<\alpha$ and $0<c\le 1$ we define
\[
Q_{f,\rm SP}(\Psi,\alpha,c,n) \eqdef
\sum_{x\in\SFix_{(f,\alpha,c)}(f^n)}\exp\left(\psi_n(x)\right) 
\]
if $\SFix_{(f,\alpha,c)}(f^n)\ne\emptyset$, and $Q_{f,\rm SP}(\Psi,\alpha,c,n) \eqdef
\exp(\inf_{x\in\Lambda}\psi_n(x))$ otherwise. We define 
\begin{equation}\label{def:nap}
P_{f,\rm SP}(\Psi,\alpha,c) \eqdef
\limsup_{n\to\infty}\frac{1}{n}\log Q_{f,\rm SP}(\Psi,\alpha,c,n).
\end{equation}
It follows that if $\SFix_{(f,\alpha,c)}(f^n)\ne\emptyset$ for some $n\in\bN$, then $P_{f,\rm SP}(\Psi,\alpha,c)$ is entirely determined by the values of the potentials $\psi_n$ on the periodic points of $f$ which are of saddle type.  Denote 
\[
P_{f,\rm SP} (\Psi)\eqdef
 \lim_{\alpha\to0}\lim_{c\to0}P_{f,\rm SP} (\Psi,\alpha,c).
\]

\section{Properties of the pressures}\label{sec:nonadd}

We collect together some immediate properties of the topological pressure and of the above defined pressures.
Let in the following $\Psi=(\psi_n)_n$ be a sequence of continuous functions which is super-additive with respect to a $C^{1+\varepsilon}$ diffeomorphism $f\colon M\to M$.

\begin{proposition}\label{huc}
	$P_{f|\Lambda}(\psi_1)\le P_{f|\Lambda}(\frac{1}{n}\psi_n)$ and 
	$P_{f^n|\Lambda}(S_n\psi_1)\le P_{f^n|\Lambda}(\psi_n)$, where here $S_n\psi_1(x) =\sum_{k=1}^{n-1}\psi_1(f^k(x))$.
\end{proposition}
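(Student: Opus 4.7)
The plan is to derive both inequalities from a single pointwise consequence of super-additivity, namely that
\[
S_n\psi_1(x) \;=\; \sum_{k=0}^{n-1}\psi_1\bigl(f^k(x)\bigr) \;\le\; \psi_n(x)
\]
for every $x\in\Lambda$ and every $n\ge1$. This comes by induction: super-additivity applied to the pair $(n-1,1)$ gives $\psi_{n-1}(x)+\psi_1(f^{n-1}(x))\le\psi_n(x)$, and the inductive hypothesis bounds $\psi_{n-1}(x)$ from below by $S_{n-1}\psi_1(x)$. I would state this observation as the first step.

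For the second inequality the conclusion is then immediate from monotonicity of the topological pressure for the homeomorphism $f^n|\Lambda$: since $S_n\psi_1\le\psi_n$ everywhere on $\Lambda$, the classical inequality $P_g(\phi)\le P_g(\psi)$ whenever $\phi\le\psi$ (with $g=f^n$) yields
\[
P_{f^n|\Lambda}(S_n\psi_1)\;\le\;P_{f^n|\Lambda}(\psi_n).
\]

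For the first inequality I would invoke the classical variational principle, which is applicable because both $\psi_1$ and $\tfrac1n\psi_n$ are continuous on the compact set $\Lambda$. Fix an arbitrary $\mu\in\cM(f|\Lambda)$. By $f$-invariance of $\mu$,
\[
\int_\Lambda S_n\psi_1\,d\mu \;=\; n\int_\Lambda \psi_1\,d\mu,
\]
while the pointwise bound $S_n\psi_1\le\psi_n$ gives $\int_\Lambda S_n\psi_1\,d\mu\le\int_\Lambda\psi_n\,d\mu$. Dividing by $n$ yields $\int\psi_1\,d\mu\le\int\tfrac1n\psi_n\,d\mu$, so that
\[
h_\mu(f)+\int_\Lambda\psi_1\,d\mu\;\le\;h_\mu(f)+\int_\Lambda\tfrac1n\psi_n\,d\mu.
\]
Taking the supremum over $\mu\in\cM(f|\Lambda)$ and applying the variational principle on both sides gives $P_{f|\Lambda}(\psi_1)\le P_{f|\Lambda}(\tfrac1n\psi_n)$.

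There is no serious obstacle here; the one point to be careful about is that in the second inequality the sequence $(\psi_n)_n$ is super-additive with respect to $f$, not with respect to $f^n$, so one should not try to invoke a sub/super-additive ergodic argument for $f^n$. The direct pointwise comparison bypasses this issue entirely.
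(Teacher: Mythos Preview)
Your proof is correct and follows exactly the route the paper indicates: the paper's own proof is a single sentence saying the inequalities follow ``immediately from super-additivity of the sequence $(\psi_n)_n$ and from the variational principle for the topological pressure,'' and you have supplied precisely those details. Your observation that the first inequality requires the variational principle (because $S_n\psi_1\le\psi_n$ does not imply $\psi_1\le\tfrac1n\psi_n$ pointwise, only after integrating against an $f$-invariant measure) is the right point to be careful about.
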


\begin{proof} This follows immediately from super-additivity of the sequence $(\psi_n)_n$ and from  the variational principle for the topological pressure.
\end{proof}  

\begin{proposition}
	$P_{f,\rm SP}(\psi_1) \le 
	P_{f,\rm SP}\left(\frac{1}{n}\psi_n\right) \le
	P_{f,\rm SP}(\Psi)$.
\end{proposition}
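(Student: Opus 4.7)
The plan is to derive both inequalities from pointwise comparisons of Birkhoff sums at saddle periodic points, then to sum over $\SFix_{(f,\alpha,c)}(f^m)$, take $\tfrac{1}{m}\log$, pass to $\limsup_{m\to\infty}$, and finally let $\alpha,c\to 0$. The key algebraic input, verified by induction from the super-additivity hypothesis, is the pair of pointwise bounds
\[
S_n\psi_1(y)\le\psi_n(y), \qquad \sum_{i=0}^{k-1}\psi_n(f^{in}(y))\le\psi_{kn}(y) \qquad (y\in\Lambda,\ n,k\ge 1).
\]
A second observation used throughout is that for every $r\ge 0$ the map $x\mapsto f^r(x)$ is a bijection of $\SFix_{(f,\alpha,c)}(f^m)$ onto itself, since the hyperbolicity constraint defining this set is phrased along the full orbit and is invariant under cyclic shifts.

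For the first inequality, I would fix $m,n\ge 1$ and $x\in\SFix_{(f,\alpha,c)}(f^m)$, apply $S_n\psi_1(f^j(x))\le\psi_n(f^j(x))$ for each $j=0,\ldots,m-1$, and switch the order of summation to obtain
\[
S_m\psi_n(x)\ge\sum_{j=0}^{m-1}S_n\psi_1(f^j(x))=\sum_{k=0}^{n-1}S_m\psi_1(f^k(x)).
\]
Since $f^m(x)=x$, the $m$-th Birkhoff sum of the single function $\psi_1$ is invariant under cyclic shift of the orbit, so every summand on the right equals $S_m\psi_1(x)$. This yields $\tfrac{1}{n}S_m\psi_n(x)\ge S_m\psi_1(x)$ pointwise on $\SFix_{(f,\alpha,c)}(f^m)$, hence $Q_{f,\rm SP}(\psi_1,\alpha,c,m)\le Q_{f,\rm SP}(\tfrac{1}{n}\psi_n,\alpha,c,m)$ for each $m$. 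Taking $\tfrac{1}{m}\log$, $\limsup_m$, and $\alpha,c\to 0$ yields the first inequality.

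For the second inequality, fix $n\ge 1$ and write $m=kn+s$ with $0\le s<n$; let $x\in\SFix_{(f,\alpha,c)}(f^m)$. Splitting $S_m\psi_n(x)$ into the first $kn$ terms grouped by residue modulo $n$ plus a tail of length $s$, applying the second super-additivity bound block-by-block, and invoking $\psi_{kn}(y)\le\psi_m(y)-\psi_s(f^{kn}(y))$, I obtain
\[
S_m\bigl(\tfrac{1}{n}\psi_n\bigr)(x)\le\frac{1}{n}\sum_{r=0}^{n-1}\psi_m(f^r(x))+C_n,
\]
where $C_n$ depends only on $n$ and on $\sup_\Lambda|\psi_j|$ for $1\le j\le n$ (finite by continuity on compact $\Lambda$). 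Exponentiating, applying Jensen's inequality to the convex function $\exp$, then summing over $x$ and invoking the bijection $x\mapsto f^r(x)$ of $\SFix_{(f,\alpha,c)}(f^m)$ gives
\[
Q_{f,\rm SP}\bigl(\tfrac{1}{n}\psi_n,\alpha,c,m\bigr)\le e^{C_n}\,Q_{f,\rm SP}(\Psi,\alpha,c,m).
\]
Taking $\tfrac{1}{m}\log$ and $\limsup_{m\to\infty}$ absorbs $C_n$, and letting $\alpha,c\to 0$ concludes.

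The main obstacle I anticipate is the bookkeeping around the residue $s\ne 0$ in the second inequality, where the tail of length $s$ together with the ``missing'' $\psi_s\circ f^{kn}$ in $\psi_{kn}\le\psi_m-\psi_s\circ f^{kn}$ must be absorbed into the constant $C_n$ via continuity of each $\psi_j$ and compactness of $\Lambda$. A minor technicality is the empty case $\SFix_{(f,\alpha,c)}(f^m)=\emptyset$, in which $Q_{f,\rm SP}$ is defined by an infimum over $\Lambda$ rather than a sum; this is handled by the same pointwise estimates together with the $f$-invariance of $\Lambda$ (so that $\inf_\Lambda\psi_j\circ f^r=\inf_\Lambda\psi_j$), which shows that the comparisons persist up to additive constants washed out by $\tfrac{1}{m}\log\limsup_m$.
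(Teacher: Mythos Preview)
Your proof is correct and follows the same overall strategy as the paper: derive pointwise comparisons between the relevant Birkhoff sums up to additive constants depending only on $n$, then sum over $\SFix_{(f,\alpha,c)}(f^m)$ and let $m\to\infty$ so that the constants disappear. The details differ slightly in two places. For the first inequality you exploit periodicity (the shift-invariance of $S_m\psi_1$ along an $m$-periodic orbit) to obtain the clean bound $\tfrac{1}{n}S_m\psi_n(x)\ge S_m\psi_1(x)$ with no error term; the paper instead proves a bound $S_M\psi_n(x)\ge n\,S_M\psi_1(x)-C(n)$ valid for \emph{all} $x\in\Lambda$, which is a little less sharp on periodic points but handles the empty case $\SFix_{(f,\alpha,c)}(f^m)=\emptyset$ without a separate argument. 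For the second inequality you arrive at an average $\tfrac{1}{n}\sum_{r=0}^{n-1}\psi_m(f^r(x))$ and then invoke Jensen together with the bijection $x\mapsto f^r(x)$ of $\SFix_{(f,\alpha,c)}(f^m)$; the paper instead splits each of $n$ copies of $\psi_m(x)$ with different initial offsets to obtain the direct pointwise bound $n\,\psi_m(x)\ge S_m\psi_n(x)-C(n)$, which avoids both Jensen and the bijection step. Either route works; yours is a bit more elegant on the first inequality, while the paper's is a bit more direct on the second.
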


\begin{proof} 
Fix $n\in\bN$. Let $m\in\bN$ and $0\le \ell<n$. From super-additivity of the function sequence we obtain
\begin{eqnarray*} 
&&\sum_{k=0}^{mn+\ell-1}\psi_n(f^k(x)) 
 =\sum_{i=0}^{n-1}\sum_{k=0}^{m-2}\psi_n(f^{kn+i}(x))
+\sum_{k=(m-1)n}^{mn+\ell-1}\psi_n(f^k(x))\\
&\ge& n\sum_{k=0}^{mn-1}\psi_1(f^k(x)) 
	-\sum_{k=0}^{n-1}\left(\psi_k(x)+\psi_{k}(f^{mn-k-1}(x))\right)
	+ (n+\ell-1)C_2(n)\\
&\ge& n\sum_{k=0}^{mn+\ell-1}\psi_1(f^k(x)) 	
	- n\sum_{k=mn}^{mn+\ell-1}\psi_1(f^k(x))
	- 2nC_1(n)
	+ (n+\ell-1)C_2(n)	\\
&\ge& n\sum_{k=0}^{mn+\ell-1}\psi_1(f^k(x)) 	
	- n(2+\ell)C_1(n)
	+ (n+\ell-1)C_2(n),
\end{eqnarray*}
where we set $\psi_0(y)=0$ and
\[
C_1(n)\eqdef\max_{k=1,\ldots,2n-1}\max_{x\in\Lambda}\psi_k(x), \quad
C_2(n)\eqdef \min_{k=1,\ldots,2n-1}\min_{x\in \Lambda}\psi_k(x).
\]  
From here we can conclude the first inequality is true.  
Analogously, by super-additivity we have
\begin{eqnarray*}
n\,\psi_{mn+\ell}(x)
& \ge&  \sum_{k=0}^{(m-1)n-1}\psi_n(f^k(x)) 
	+ \sum_{k=0}^{n-1}\left( \psi_k(x) +\psi_{n+\ell-k}(f^{(m-1)n+k}(x) \right)\\
& \ge& \sum_{k=0}^{mn+\ell-1}\psi_n(f^k(x))+2 n\,C_2(n)	
	- \sum_{k=0}^{n+\ell-1}\psi_n(f^{(m-1)n+k}(x))\\
&\ge& \sum_{k=0}^{mn+\ell-1}\psi_n(f^k(x))
	-(n+\ell)\, C_1(n)	+2nC_2(n).
\end{eqnarray*}
From here we can conclude the second inequality. 
\end{proof}

\begin{proposition}\label{lem:gh}
  Let $\alpha>0$, $0< c< 1$, and  $n\ge \bN$ such that $\SFix_{(f,\alpha,c)}(f^n)\ne\emptyset$. Then
  \[
  P_{f,\rm SP}(\Psi,\alpha,c) \ge \frac{1}{n}P_{f^n,\rm SP}(\psi_n,n\alpha,c\cdot d(n))
  \]
  with $d(n)$ defined in~(\ref{kubus}).
\end{proposition}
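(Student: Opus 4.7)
The plan is to exploit three ingredients: the iterated super-additivity of $\Psi$, the left inclusion in~(\ref{kubu}), and the observation that a limsup along the arithmetic subsequence $\{mn\}$ is dominated by the full limsup. I would begin by unfolding
\[
P_{f^n,\rm SP}(\psi_n,n\alpha,c\cdot d(n))
= \limsup_{m\to\infty}\frac{1}{m}\log Q_{f^n,\rm SP}(\psi_n,n\alpha,c\cdot d(n),m),
\]
and focusing on those $m$ for which $\SFix_{(f^n,n\alpha,c\cdot d(n))}(f^{mn})\ne\emptyset$, so that the defining sum reads $\sum_x\exp\bigl(\sum_{k=0}^{m-1}\psi_n(f^{nk}(x))\bigr)$.

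Next, iterated super-additivity of $\Psi$ yields $\sum_{k=0}^{m-1}\psi_n(f^{nk}(x))\le\psi_{mn}(x)$ for every $x$, while the first inclusion of~(\ref{kubu}) gives $\SFix_{(f^n,n\alpha,c\cdot d(n))}(f^{mn})\subset\SFix_{(f,\alpha,c)}(f^{mn})$. Combining these two observations,
\[
Q_{f^n,\rm SP}(\psi_n,n\alpha,c\cdot d(n),m)
\le \sum_{x\in\SFix_{(f,\alpha,c)}(f^{mn})}\exp(\psi_{mn}(x))
= Q_{f,\rm SP}(\Psi,\alpha,c,mn).
\]
Taking $\limsup_{m\to\infty}\frac{1}{m}\log$ of both sides and rewriting $\frac{1}{m}=n\cdot\frac{1}{mn}$, the right-hand side becomes $n$ times a limsup along the subsequence $N=mn$, which is bounded by the full limsup defining $P_{f,\rm SP}(\Psi,\alpha,c)$. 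Dividing by $n$ completes the inequality.

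The only delicate point is the case when $\SFix_{(f^n,n\alpha,c\cdot d(n))}(f^{mn})=\emptyset$ for some (or all) large $m$, for then the definition of $Q_{f^n,\rm SP}$ falls back to the infimum convention $\exp(m\inf_\cR\psi_n)$, giving a $\limsup$ contribution of $\inf_\cR\psi_n$. To absorb this, I would use the hypothesis $\SFix_{(f,\alpha,c)}(f^n)\ne\emptyset$ to pick a saddle $y$ fixed by $f^n$; the periodicity argument shows $y\in\SFix_{(f,\alpha,c)}(f^{mn})$ for every $m\ge 1$, and super-additivity along the $f^n$-fixed orbit of $y$ gives $\psi_{mn}(y)\ge m\psi_n(y)$. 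Hence $nP_{f,\rm SP}(\Psi,\alpha,c)\ge\psi_n(y)\ge\inf_\cR\psi_n$, so the infimum convention does not violate the desired bound. The main (mild) obstacle is therefore bookkeeping this empty-set case; the core algebraic content is just super-additivity plus~(\ref{kubu}).
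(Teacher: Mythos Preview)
Your proposal is correct and follows essentially the same approach as the paper: both arguments combine iterated super-additivity of $\Psi$ with the left inclusion in~(\ref{kubu}) to obtain $Q_{f,\rm SP}(\Psi,\alpha,c,mn)\ge Q_{f^n,\rm SP}(\psi_n,n\alpha,c\cdot d(n),m)$, and then pass to the $\limsup$ along the subsequence $mn$. Your treatment of the empty-set convention is more explicit than the paper's (which glosses over it), but the core argument is identical.
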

  
\begin{proof}
Let $n\in\bN$ and $0<\alpha<1$ and $0<c<1$ such that $\SFix_{(f,\alpha,c)}(f^n)\ne\emptyset$. Given $m\in\bN$, by super-additivity and by the inclusion~(\ref{kubu}) we have
\begin{eqnarray*}
&& Q_{f,\rm SP}(\Psi,\alpha,c,mn) 
= \sum_{x\in\SFix_{(f,\alpha,c)}(f^{mn})}\exp\left(\psi_{mn}(x)\right)\\
& \ge& \sum_{x\in\SFix_{(f^n,n\alpha,c\cdot d(n))}(f^{mn})}
	\exp\left(\psi_{mn}(x)\right)\\
&\ge& \sum_{x\in\SFix_{(f^n,n\alpha,c\cdot d(n))}((f^n)^m)}
	\exp\left(\psi_n(x)+
\psi_n(f^n(x))+\cdots +\psi_n(f^{(m-1)n}(x))\right).
\end{eqnarray*}
Thus, letting $m\to\infty$, we obtain
\[
P_{f,\rm SP}(\Psi,\alpha,c) 
\ge \frac{1}{n} P_{f^n,\rm SP}(\psi_n,n\alpha,c\cdot d(n)).
\]
This proves the proposition.
\end{proof}

 
We now introduce the sequence $\Phi=(\varphi_n)_n$ of potentials which measure the volume growth under $df^n$. Consider the linear map $(df_x)^\wedge\colon (T_xM)^\wedge\to (T_{f(x)}M)^\wedge$ between the full exterior algebras of the tangent spaces induced by $df_x$. Let  $\varphi_n\colon\Lambda\to\bR$ be given by  
\begin{equation}\label{hop}
\varphi_n(x) \eqdef  -\log|| (df^n_x)^\wedge ||,
\end{equation}
where the norm is the one that is induced by the Riemannian metric.
Denote by $\vol_\ell(v_1,\ldots, v_\ell;x)$ the $\ell$-dimensional volume of a
parallelepiped which is spanned by the vectors $v_1$, $\ldots$, $v_\ell\in
T_xM$. Notice that we have 
\[
|| (df^n_x)^\wedge || 
= \max_{1\le \ell\le \dim M}\sup_{v_i\in T_xM}
\frac{\vol_\ell(df^n_x(v_1),\ldots,df^n_x(v_\ell);f^n(x))}
     {\vol_l(v_1\ldots,v_\ell;x)} .
\]

\begin{lemma}\label{lem:2}
  The sequence $\Phi=(\varphi_n)_n$ given by~(\ref{hop}) is a super-additive
  sequence of H\"older continuous functions.
\end{lemma}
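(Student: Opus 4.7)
The plan has two parts, corresponding to the two assertions of the lemma.

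For super-additivity, the key ingredient is the chain rule. Fix $x \in \Lambda$ and $n,m \in \bN$. Then $df^{n+m}_x = df^m_{f^n(x)} \circ df^n_x$, and since the exterior-algebra construction is functorial, the induced map on the full exterior algebras factors as
\[
(df^{n+m}_x)^\wedge = (df^m_{f^n(x)})^\wedge \circ (df^n_x)^\wedge .
\]
The operator norm $\|\cdot\|$ induced by the Riemannian metric is submultiplicative with respect to composition, so
\[
\|(df^{n+m}_x)^\wedge\| \;\le\; \|(df^n_x)^\wedge\|\cdot \|(df^m_{f^n(x)})^\wedge\|.
\]
Taking $-\log$ reverses the inequality and gives exactly $\varphi_n(x)+\varphi_m(f^n(x))\le \varphi_{n+m}(x)$, which is super-additivity.

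For Hölder continuity of each $\varphi_n$, I would argue as follows. Because $f$ is $C^{1+\varepsilon}$ on the compact manifold $M$, the derivative map $x\mapsto df_x$ is $\varepsilon$-Hölder continuous when viewed as a section of the bundle of linear maps between tangent spaces (trivialized locally by a finite atlas). Since $f$ itself is Lipschitz on $M$ and compositions and products of Hölder continuous, uniformly bounded matrix-valued maps on a compact set remain Hölder continuous (possibly with a smaller exponent if one wanted to be careful, but $\varepsilon$ works throughout via a direct telescoping $df^n_x-df^n_y=\sum_{k} df^{n-k-1}_{f^{k+1}(x)}(df_{f^k(x)}-df_{f^k(y)})df^k_y$ on a compact set), the map $x\mapsto df^n_x$ is $\varepsilon$-Hölder for every fixed $n$. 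The assignment $A\mapsto A^\wedge$ is polynomial in the entries of $A$, hence Lipschitz on any bounded set of matrices, and the operator norm is $1$-Lipschitz, so $x\mapsto \|(df^n_x)^\wedge\|$ is $\varepsilon$-Hölder on $\Lambda$.

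Finally, since $f$ is a diffeomorphism and $\Lambda$ is compact, there is a constant $c_n>0$ with $\|(df^n_x)^\wedge\| \ge c_n > 0$ for all $x\in\Lambda$ (one may bound it below by $|\det df^n_x|$, which is uniformly bounded away from zero on $\Lambda$); likewise it is bounded above. On the interval $[c_n,C_n]$ the logarithm is Lipschitz, so composing with $-\log$ preserves $\varepsilon$-Hölder continuity, giving $\varphi_n\in C^\varepsilon(\Lambda)$. There is no serious obstacle here; the only point that requires a moment's care is the uniform lower bound on $\|(df^n_x)^\wedge\|$, which is what lets us pass from Hölder continuity of the norm to Hölder continuity of its logarithm.
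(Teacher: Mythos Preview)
Your proof is correct. For super-additivity you take a more direct route than the paper: you use functoriality of the exterior-algebra construction, $(df^{n+m}_x)^\wedge=(df^m_{f^n(x)})^\wedge\circ(df^n_x)^\wedge$, and then the submultiplicativity of the operator norm. The paper instead expresses $\|(df^n_x)^\wedge\|$ via singular values as $\max_{1\le\ell\le\dim M}\sigma_1(df^n_x)\cdots\sigma_\ell(df^n_x)$ and invokes the Horn-type inequality $\prod_{i\le\ell}\sigma_i(AB)\le\prod_{i\le\ell}\sigma_i(A)\prod_{i\le\ell}\sigma_i(B)$ for each fixed $\ell$. Your argument is shorter and avoids this nontrivial linear-algebra fact; the paper's approach has the small advantage of producing the explicit singular-value formula~(\ref{huj}), though that formula is not used later. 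For H\"older continuity the two arguments are essentially parallel (H\"older dependence of $df^n_x$ on $x$, then pass to a Lipschitz function of the matrix), but you are more explicit than the paper in checking the uniform lower bound on $\|(df^n_x)^\wedge\|$ needed to take the logarithm; your bound $\|(df^n_x)^\wedge\|\ge|\det df^n_x|$ is correct, and in fact one even has $\|(df^n_x)^\wedge\|\ge 1$ since the induced map is the identity on $\Lambda^0$.
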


\begin{proof}
  We can express $|| (df^n_x)^\wedge||$ in terms of the
  singular values of $df^n_x$. Recall that the \emph{singular values}
  $\sigma_1(L)\ge\cdots\ge\sigma_{\dim M}(L)\ge 0$ of a linear operator
  $L\colon T_xM\to T_{f(x)}M$ are the eigenvalues of 
  $(L^\ast L)^{1/2}$, where $L^\ast$ denotes the adjoint of $L$. We have 
  \begin{equation}\label{huj}
    || (df^n_x)^\wedge|| 
    = \max_{1\le \ell\le\dim M}\sigma_1(df^n_x)\cdots\sigma_\ell(df^n_x).
  \end{equation}
  For every $n\in\bN$ and every $1\le \ell\le \dim M$ the map $x\mapsto \sigma_\ell(df^n_x)$ is
  H\"older continuous on $M$. Given $x\in\Lambda$, for some number $1\le \ell=\ell(x,n+m)\le\dim M$ we have $|| (df^{n+m}_x)^\wedge|| = \sigma_1(df^{m+n}_x)\cdots\sigma_\ell(df^{m+n}_x)$, which implies 
  \begin{eqnarray*}
    || (df^{n+m}_x)^\wedge|| 
   & =& \sigma_1(df^{n+m}_x)\cdots\sigma_\ell(df^{n+m}_x)\\
    	    &\le& \sigma_1(df^n_x)\cdots\sigma_\ell(df^n_x)\cdot
    \sigma_1(df^m_{f^n(x)})\cdots\sigma_\ell(df^m_{f^n(x)})\\
    &\le& || (df^n_x)^\wedge|| \,|| (df^m_{f^n(x)})^\wedge|| ,
  \end{eqnarray*}
  where the last inequality follows from the relation~(\ref{huj}).
  This implies the super-additivity of the sequence $\Phi=(\varphi_n)_n$.
\end{proof}

The exponential volume growth rate is naturally related to the Lyapunov exponents of $f$.  For every Lyapunov regular point $x$ with a positive Lyapunov exponent, by $df$-invariance of the unstable subbundle $E^u$ we conclude that  
\begin{equation}\label{sontag}
- \log| \det df^n_x|_{E^u_x}| = S_n\varphi^u(x),
 \end{equation}
moreover, for every Lyapunov regular point  we have
\[
  \lim_{n\to\infty}\frac{1}{n}\log|\det df^n_x|_{E^u_x}| =
  \sum_i \lambda_i(x)^+,
\]
where we denote $a^+\eqdef\max\{0,a\}$ (see~\cite[Corollary~11.4]{Man:}).

We denote by $\cM^\infty_{\rm E}$ the set of all ergodic Borel probability measures that are invariant with respect to $f^k$ for some $k\ge 1$. Let $\nu\in\cM^\infty_{\rm E}$ be an ergodic  $f^k$-invariant Borel probability measure. 
Super-additivity implies the existence of the following limit and the equality 
\begin{equation}\label{ko}
  \lim_{n\to\infty}\frac{1}{kn}\int_\Lambda \varphi_{kn}\, d\nu
= \sup_{n\ge 1} \int_\Lambda\frac{1}{kn} \varphi_{kn} d\nu 
\end{equation}
(compare, for example,~\cite[Theorem 10.1]{Wal:81}).  
%
%
Further, since for $1\le \ell < k$  
\[
\int_\Lambda\varphi_{kn+\ell}\,d\nu \ge 
\int_\Lambda (\varphi_{kn}(x) + \varphi_\ell(f^{kn}(x)) )\, d\nu (x)
= \int_\Lambda \varphi_{kn}\, d\nu + \int_\Lambda\varphi_\ell \,d\nu 
\]
and 
\[
\int_\Lambda\varphi_{kn+\ell}\,d\nu
\le \int_\Lambda\varphi_{(n+1)k}\,d\nu-\int_\Lambda\varphi_{k-\ell}\,d\nu,
\]
we can conclude that the following limit exists 
\begin{equation}\label{eq:wqw} 
\lim_{n\to\infty}\frac{1}{n}\int_\Lambda\varphi_n d\nu
\end{equation}
and is equal to~(\ref{ko}).

If $\nu\in\cM^\infty_{\rm E}$ is an ergodic  $f^k$-invariant Borel probability measure
with a positive Lyapunov exponent, then it follows from the multiplicative ergodic theorem that the function $x\mapsto \log|\det df^k_x|_{E^u_x}|$ is integrable and  that $\nu$-almost every point $x$ is Lyapunov regular and satisfies
\[
 \lim_{n\to\infty}\frac{1}{n} \log|| (df^{kn}_x)^\wedge || 
= \lim_{n\to\infty}\frac{1}{n}\log|\det df^{kn}_x|_{E^u_x}|
=  k\sum_i\lambda_i(x)^+ 
\]
(see for example~\cite{Man:}). Moreover, the functions $x\mapsto \lambda_i(x)$ are measurable and $f^k$-invariant, and so constant $\lambda_i(x)=\lambda_i(\nu)$ for  $\nu$-almost every $x$ and
\begin{equation}\label{kooo}
  \lim_{n\to\infty}\frac{1}{kn}\int_\Lambda \varphi_{kn}\, d\nu
=\sup_{n\ge 1} \int_\Lambda\frac{1}{kn} \varphi_{kn}\, d\nu 
= \int_\Lambda\varphi^u\, d\nu = -\sum_i\lambda_i(\nu)^+   .
\end{equation}
 %
%
Since for every $n\ge 1$
\[
\frac{1}{k}\int_\Lambda\varphi_k\,d\nu = 
\frac{1}{kn}\int_\Lambda n\varphi_k\,d\nu 
\le
\frac{1}{kn}\int_\Lambda\varphi_{kn}\,d\nu
\]
(remember that $\nu$ is $f^k$-invariant), with~(\ref{kooo}) we obtain 
\begin{equation}\label{martina}
\frac{1}{k}\int_\Lambda\varphi_k\,d\nu \le 
-\sum_i\lambda_i(\nu)^+.
\end{equation}
Furthermore,  even though that $\nu$ may not be invariant with respect to the map $f$, it makes sense to define the \emph{entropy} of $\nu$ with respect to $f$ by
\[
h_\nu(f)\eqdef \frac{1}{k}h_\nu(f^k)
\]
(see~\cite{Fal:88} for details).


We are now prepared to prove Theorem~\ref{theorem:heu}.

\begin{proof}[Proof of Theorem~\ref{theorem:heu}] Observe that for any $f$-invariant ergodic measure $\mu$ (which, by assumption, possesses a positive Lyapunov exponent) as a particular case of~(\ref{kooo})  we have
\begin{equation}\label{eli}
\lim_{n\to\infty}\int_\Lambda\frac{1}{n}\varphi_{n}\,d\mu
= \sup_{n\ge 1} \int_\Lambda\frac{1}{n} \varphi_{n}\, d\mu
= \int_\Lambda\varphi^ud\mu.
\end{equation}
Thus, using the variational principle for pressure, we obtain
\[
h_\mu(f)+ \int_\Lambda\varphi^ud\mu =
h_\mu(f)+\sup_{n\ge 1}\int_\Lambda\frac{1}{n}\varphi_{n}\,d\mu \le 
\sup_{n\ge 1}P_{f|\Lambda}\left(\frac{1}{n}\varphi_n\right).
\]
On the other hand for every $n\ge 1$
\[
P_{f|\Lambda}\left(\frac{1}{n}\varphi_n\right) = 
\sup_{\mu\in\cM_{\rm E}} 
\left(h_\mu(f)+\int_\Lambda\frac{1}{n}\varphi_{n}\,d\mu \right)
\le \sup_{\mu\in\cM_{\rm E}} \left(h_\mu(f)+\int_\Lambda\varphi^ud\mu \right),
\]
where we used~(\ref{eli}).
 From here the statement follows. 
\end{proof}

\begin{proposition}\label{gigi}
$P_{f,\rm SP}(\Phi)\le 	P_{f,\rm SP}(\varphi^u)$
\end{proposition}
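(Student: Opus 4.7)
My approach is to compare the two partition sums $Q_{f,\rm SP}(\Phi,\alpha,c,n)$ and $Q_{f,\rm SP}(\varphi^u,\alpha,c,n)$ term by term on the set $\SFix_{(f,\alpha,c)}(f^n)$. Specifically, I will establish the pointwise bound
\[
\varphi_n(x) \;\le\; S_n\varphi^u(x) \qquad \text{for every } x \in \SFix_{(f,\alpha,c)}(f^n),
\]
from which applying $\exp(\cdot)$ and summing immediately yields $Q_{f,\rm SP}(\Phi,\alpha,c,n) \le Q_{f,\rm SP}(\varphi^u,\alpha,c,n)$.

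The pointwise bound will come from the singular-value formula~\eqref{huj} for $\|(df^n_x)^\wedge\|$. Writing $u = \dim E^u_x$ (which is positive since each saddle periodic point is Lyapunov regular with a non-trivial unstable subspace), I would argue that
\[
\|(df^n_x)^\wedge\|
\;\ge\; \sigma_1(df^n_x)\cdots\sigma_u(df^n_x)
\;\ge\; |{\det}\, df^n_x|_{E^u_x}|.
\]
The first inequality is the definition of the maximum in~\eqref{huj} specialized to the index $\ell=u$. The second inequality holds because $\sigma_1\cdots\sigma_u$ realizes the supremum of $u$-dimensional volume growth of $df^n_x$ over all $u$-dimensional subspaces of $T_xM$, and in particular dominates the growth on the specific subspace $E^u_x$. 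Taking $-\log$ and invoking the identity~\eqref{sontag} then yields $\varphi_n(x)\le S_n\varphi^u(x)$, as claimed.

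From that point the remainder is bookkeeping. I would take $\frac{1}{n}\log$ and $\limsup_{n\to\infty}$ on both sides of the $Q$-inequality to obtain $P_{f,\rm SP}(\Phi,\alpha,c)\le P_{f,\rm SP}(\varphi^u,\alpha,c)$, and then let $c\to 0$ and $\alpha\to 0$ in succession to conclude $P_{f,\rm SP}(\Phi)\le P_{f,\rm SP}(\varphi^u)$.

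The only mildly delicate point I anticipate is the ``otherwise'' branch where $\SFix_{(f,\alpha,c)}(f^n)=\emptyset$, since the partition sums are then defined through $\exp(\inf_\Lambda\varphi_n)$ and $\exp(n\inf_\cR\varphi^u)$ respectively, and a comparison between these default values is not automatic (for instance, on a uniformly contracting map it would go the wrong way). This is side-stepped by the standing hypothesis that $\Lambda$ carries a hyperbolic $f$-invariant measure: combined with Katok's horseshoe approximation (used throughout the paper), this guarantees that for all sufficiently small $\alpha, c$ the set $\SFix_{(f,\alpha,c)}(f^n)$ is non-empty for infinitely many $n$. Hence in the iterated limit $\lim_{\alpha\to 0}\lim_{c\to 0}$ only these $n$ contribute to the $\limsup$ defining each pressure, and the main-case inequality suffices to close the argument.
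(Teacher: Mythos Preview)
Your proposal is correct and follows exactly the same idea as the paper's proof, which is a one-liner: ``Notice that we have $\varphi_n(x)\le S_n\varphi^u(x)$ for every saddle point $x$.'' You simply supply the details the paper omits---the singular-value justification of that pointwise inequality, the passage from $Q$ to $P$, and the handling of the empty-$\SFix$ default values---none of which the paper spells out.
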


\begin{proof}
	Notice that we have $\varphi_n(x) \le  S_n\varphi^u(x)$ for every saddle point $x$.
\end{proof}

\begin{proposition}\label{lem:main}
	If $f|\Lambda$ is non-uniformly hyperbolic, then 
	$P_{f,\rm SP}(\frac{1}{n}\varphi_n)=P_{f|\Lambda}(\frac{1}{n}\varphi_n)$ and $P_{f^n,\rm SP}(\varphi_n)=P_{f^n|\Lambda}(\varphi_n)$.
\end{proposition}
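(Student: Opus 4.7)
My plan is to reduce both equalities to a single general statement: if $g\colon M\to M$ is a $C^{1+\varepsilon}$ diffeomorphism with $g|\Lambda$ non-uniformly hyperbolic and $\psi\colon\Lambda\to\bR$ is continuous, then $P_{g,\rm SP}(\psi)=P_{g|\Lambda}(\psi)$. The reduction is immediate: by Lemma~\ref{lem:2} the function $\varphi_n$ is H\"older continuous (so $\frac{1}{n}\varphi_n$ is continuous), and non-uniform hyperbolicity of $f$ transfers to $f^n$. So it is enough to handle a general continuous potential.

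For the upper bound $P_{g,\rm SP}(\psi)\le P_{g|\Lambda}(\psi)$, I would prove that for every $\alpha>0$ and $c\in(0,1]$ there exists $\epsilon=\epsilon(\alpha,c)>0$, independent of $m$, such that $\SFix_{(g,\alpha,c)}(g^m)$ is an $(m,\epsilon)$-separated subset of $\Lambda$. The point is that any $(\alpha,c)$-hyperbolic fixed point of $g^m$ admits local stable and unstable manifolds of size depending only on $\alpha,c$ (standard Pesin theory), so two distinct such points cannot $(m,\epsilon)$-shadow one another on a scale determined by $\alpha,c$. Thus $Q_{g,\rm SP}(\psi,\alpha,c,m)$ is dominated by the standard partition function of the topological pressure over $(m,\epsilon)$-separated sets; taking $\limsup_{m\to\infty}\frac{1}{m}\log$ and then $\alpha,c\to 0$ yields the inequality.

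For the lower bound I would invoke the Katok horseshoe approximation~\cite{Kat:80,Men:88}. By the variational principle and the hypothesis of non-uniform hyperbolicity,
\[
P_{g|\Lambda}(\psi)=\sup_{\mu\in\cM_{\rm E}(g|\Lambda)}\bigl(h_\mu(g)+\textstyle\int_\Lambda\psi\,d\mu\bigr),
\]
with every $\mu$ in the supremum hyperbolic. Fix such $\mu$ with $h_\mu(g)>0$ and $\delta>0$. Katok's theorem yields a compact uniformly hyperbolic $g$-invariant set $K\subset\Lambda$ with hyperbolicity constants $(\alpha_0,c_0)$, on which $g|K$ satisfies specification, with $h_{\rm top}(g|K)>h_\mu(g)-\delta$ and carrying an ergodic measure $\mu_K$ such that $|\int\psi\,d\mu_K-\int\psi\,d\mu|<\delta$ (weak$^*$ closeness suffices since $\psi$ is continuous). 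All periodic points of $g|K$ lie in $\SFix_{(g,\alpha_0,c_0)}(g^m)$, so the Bowen partition-function formula on $K$ gives $P_{g,\rm SP}(\psi,\alpha_0,c_0)\ge P_{g|K}(\psi)$; the variational principle on $K$ makes the right-hand side at least $h_\mu(g)+\int\psi\,d\mu-2\delta$. Letting $\delta\to 0$ and then $\alpha_0,c_0\to 0$ closes this case. For hyperbolic $\mu$ with $h_\mu(g)=0$, I would instead apply Katok's closing lemma to approximate $\mu$-generic orbit segments of length $m$ by saddle periodic orbits with uniformly controlled hyperbolicity; by uniform continuity of $\psi$ their Birkhoff averages approximate $\int\psi\,d\mu$, and these orbits contribute to $Q_{g,\rm SP}$, yielding $P_{g,\rm SP}(\psi)\ge\int\psi\,d\mu=h_\mu(g)+\int\psi\,d\mu$.

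The main technical obstacle is the simultaneous control of entropy, integral, and the hyperbolicity parameters $(\alpha_0,c_0)$ in Katok's approximation. This is precisely the machinery that will be developed to prove Theorem~\ref{Main}, so once it is in place the present proposition follows by essentially the same argument with the continuous potential $\psi$ playing the role of the discontinuous $\varphi^u$; indeed continuity of $\psi$ simplifies the integral step, as it reduces to weak$^*$ convergence rather than requiring integrability of a measurable subbundle Jacobian along the approximating horseshoes.
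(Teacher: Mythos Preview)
Your proposal is correct and follows the same route as the paper, which simply invokes \cite[Theorem~1]{GelWol:} after observing that $\frac{1}{n}\varphi_n$ and $\varphi_n$ are H\"older continuous (Lemma~\ref{lem:2}) and that $f^n$ is again a $C^{1+\varepsilon}$ non-uniformly hyperbolic diffeomorphism for which $\Lambda$ is locally maximal. What you have outlined---$(m,\epsilon)$-separation of $(\alpha,c)$-saddles for the upper bound, and Katok horseshoe approximation combined with Bowen's periodic-orbit pressure formula for the lower bound---is precisely the argument behind that cited theorem.
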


\begin{proof}
For every $n\ge 1$, $f^n$ is a $C^{1+\varepsilon}$ diffeomorphism which is non-uniformly hyperbolic and with respect to which $\Lambda$ is locally maximal.	
Given the H\"older continuous potentials $\frac{1}{n}\varphi_n$ and $\varphi_n$,  the statement is proved similar to~\cite[Theorem 1]{GelWol:} applied to the diffeomorphisms $f$ and $f^n$, respectively.
\end{proof}  

\begin{proposition}\label{his}
	Under the hypothesis of Theorem~\ref{theorem:2} we have 
	\begin{equation}\label{martu}
	\sup_{\nu\in\cM^\infty_{\rm E}}\left( h_\nu(f)-\sum_i\lambda_i(\nu)^+
	\right) 
	\le P_{f,\rm SP}(\Phi).
	\end{equation}
\end{proposition}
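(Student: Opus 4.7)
Let $\nu \in \cM^\infty_{\rm E}$ be ergodic with respect to $f^k$. The strategy is to apply Proposition \ref{lem:gh} to $\Phi$ with $n = km$ for each $m\ge 1$, replace the resulting saddle pressure by the classical topological pressure via Proposition \ref{lem:main}, apply the variational principle for $(f^n,\varphi_n)$ with the test measure $\nu$, and finally let $m\to\infty$ using the convergence in (\ref{kooo}).

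Before running this chain I must verify that $\nu$ possesses a positive Lyapunov exponent, a precondition for (\ref{kooo}). Since $\nu$ need not be $f$-invariant, I consider the $f$-invariant average $\bar\nu := \frac{1}{k}\sum_{i=0}^{k-1} f^i_*\nu$. By non-uniform hyperbolicity of $f$, every $f$-ergodic component of $\bar\nu$ is hyperbolic and therefore carries a positive Lyapunov exponent; hence $\bar\nu$-a.e.\ point does. The inequality $\nu\le k\bar\nu$ gives $\nu\ll\bar\nu$, so the same holds $\nu$-a.e.; by $f^k$-ergodicity of $\nu$ and Oseledets, the Lyapunov exponents are $\nu$-a.e.\ constant and $\sum_i\lambda_i(\nu)^+>0$. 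The same averaging argument shows that $f^n|\Lambda$ remains non-uniformly hyperbolic for every $n\ge 1$, which is what is needed in order to apply Proposition \ref{lem:main} to iterates.

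Now fix $m\ge 1$ and set $n := km$. Katok's theorem applied to the hyperbolic measure $\nu$ (regarded as $f^n$-invariant) furnishes saddle periodic points of $f$ with period dividing $n$, so $\SFix_{(f,\alpha,c)}(f^n)\ne\emptyset$ for all sufficiently small $\alpha,c>0$. Proposition \ref{lem:gh} then gives
\[
P_{f,\rm SP}(\Phi,\alpha,c)\ge \frac{1}{n}\,P_{f^n,\rm SP}\bigl(\varphi_n,\,n\alpha,\,c\cdot d(n)\bigr).
\]
Letting $c\to 0$ first and then $\alpha\to 0$, and noting that $d(n)$ stays finite while $n\alpha\to 0$, I obtain $P_{f,\rm SP}(\Phi)\ge \frac{1}{n}P_{f^n,\rm SP}(\varphi_n)$. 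Proposition \ref{lem:main} applied to the non-uniformly hyperbolic $f^n|\Lambda$ identifies this with $\frac{1}{n}P_{f^n|\Lambda}(\varphi_n)$. The classical variational principle for the continuous potential $\varphi_n$ with respect to $f^n$, combined with the $f^n$-invariance of $\nu$ and the Abramov-type identity $h_\nu(f^n)=m\cdot h_\nu(f^k)=n\cdot h_\nu(f)$ (using the definition $h_\nu(f)=\tfrac{1}{k}h_\nu(f^k)$), yields
\[
P_{f,\rm SP}(\Phi)\ge \frac{1}{n}\left(h_\nu(f^n)+\int_\Lambda\varphi_n\,d\nu\right) = h_\nu(f) + \frac{1}{km}\int_\Lambda\varphi_{km}\,d\nu.
\]
Sending $m\to\infty$ along a cofinal subsequence for which the non-emptiness hypothesis above persists (provided by Katok), the convergence (\ref{kooo}) forces $\frac{1}{km}\int_\Lambda\varphi_{km}\,d\nu\to -\sum_i\lambda_i(\nu)^+$, whence $P_{f,\rm SP}(\Phi)\ge h_\nu(f)-\sum_i\lambda_i(\nu)^+$. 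Taking the supremum over $\nu\in\cM^\infty_{\rm E}$ completes the proof.

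The main obstacle is the verification that every $\nu\in\cM^\infty_{\rm E}$ possesses a positive Lyapunov exponent even when $\nu$ fails to be $f$-invariant; the averaging trick together with the absolute continuity $\nu\ll\bar\nu$ dispenses with this issue. A minor nuisance is the interleaving of the limits $c\to 0$ and $\alpha\to 0$ in the presence of the $\alpha$-dependent factor $d(n)$, but since $d(n)$ is bounded for any fixed small $\alpha>0$ the routine is the standard one. All remaining ingredients are direct applications of the propositions already established.
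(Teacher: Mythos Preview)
Your proof is correct and follows essentially the same chain as the paper---Proposition~\ref{lem:gh}, then Proposition~\ref{lem:main}, then the variational principle for $(f^n,\varphi_n)$, then~(\ref{kooo})---only organized as a direct argument rather than by contradiction. One small slip: Katok's closing lemma does not produce saddles with period \emph{dividing} $n$ (rather with period a multiple of $n$), but your later restriction to a cofinal subsequence of $m$ repairs this; you also make explicit, via the averaging $\bar\nu$, that every $\nu\in\cM^\infty_{\rm E}$ is hyperbolic, a point the paper uses without comment.
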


\begin{proof}
Let us assume that there exists $\nu\in\cM_{\rm E}^\infty$ which is invariant with respect to $f^k$ for some $k\ge 1$ and which satisfies
\[
0< h_\nu(f) -\sum_i\lambda_i(\nu)^+ - 
P_{f,\rm SP}(\Phi)\eqdef \delta.
\]	
By~(\ref{kooo}) there exists $N\ge 1$ such that for every $\ell\ge N$
\[
h_\nu(f) + \frac{1}{\ell}\int_\Lambda \varphi_\ell d\nu \ge P_{f,\rm SP}(\Phi) +\frac{3}{4}\delta.
\]
Note that any $f^k$-invariant ergodic measure is also invariant with respect to $f^{k\ell}$, $\ell\ge 1$.
Hence, applying the variational principle for the topological pressure with respect to the map $f^{k\ell}$, we obtain for every $\ell\ge N$
\[
\frac{1}{k\ell}P_{f^{k\ell}|\Lambda}(\varphi_{k\ell}) 
\ge h_\nu(f)+\frac{1}{k\ell}\int_\Lambda\varphi_{k\ell}d\nu
\ge P_{f,\rm SP}(\Phi)+\frac{3}{4}\delta.
\]
With Proposition~\ref{lem:main} we obtain
\[
\frac{1}{k\ell}P_{f^{k\ell}, \rm SP}(\varphi_{k\ell}) 
\ge P_{f,\rm SP}(\Phi)+\frac{3}{4}\delta.
\]
Choose numbers $\alpha_0>0$, and $0<c_0<1$ such that
\[
P_{f,\rm SP}(\Phi) +\frac{1}{4}\delta
\ge  P_{f,\rm SP}(\Phi,\alpha,c).
\]
for every $0<\alpha<\alpha_0$ and $0<c<c_0$. 

	Recall that by the Katok closing lemma (see~\cite{KatHas:95}), given a Borel probability measure $\nu$ which is invariant and hyperbolic with respect to some $C^{1+\varepsilon}$ diffeomorphism $g\colon M\to M$, we can always find sufficiently many orbits that are periodic with respect to $g$ (with some period) and of saddle type. In particular, given a $g= f^k$-invariant hyperbolic measure, for sufficiently large $\ell\ge1$ the set $\SPer(f^{k\ell})$ is nonempty.  
Fixing now $\ell\ge N$ such that $\SPer(f^{k\ell})\ne\emptyset$, we find numbers $\alpha=\alpha(\ell)<\alpha_0$ and $c=c(\ell)<c_0$ small enough such that $\SPer_{(f,\alpha,c)}(f^{k\ell})\ne\emptyset$ and that
\[
\frac{1}{k\ell}P_{f^{k\ell},\rm SP}(\varphi_{k\ell},kl\alpha,c\cdot d(k\ell))
\ge \frac{1}{k\ell}P_{f^{k\ell},\rm SP}(\varphi_{k\ell}) -\frac{\delta}{4}
\]
and hence
\[
\frac{1}{k\ell}P_{f^{k\ell},\rm SP}(\varphi_{k\ell},k\ell\alpha,c\cdot d(k\ell))
> P_{f,\rm SP}(\Phi,\alpha,c), 
\]
in contradiction to Proposition~\ref{lem:gh}. Thus, we have shown~(\ref{martu}).
\end{proof}

\begin{proof}[Proof of Theorem~\ref{theorem:2}]
With Proposition~\ref{gigi} and with Theorem~\ref{Main} we obtain
\[
P_{f,\rm SP}(\Phi)  
\le P_{f,\rm SP}(\varphi^u) 
= \sup_{\mu\in\cM_{\rm E}}
	\left(h_\mu(f)+\int_\Lambda\varphi^u d\mu\right).
\]
Clearly $\cM_{\rm E}\subset \cM_{\rm E}^\infty$. Thus, with Proposition~\ref{his} we can conclude that
\begin{eqnarray*}
\sup_{\nu\in\cM^\infty_{\rm E}}\left(h_\nu(f)-\sum_i\lambda_i(\nu)^+
\right) 
&=& \sup_{\mu\in\cM_{\rm E}}\left(h_\mu(f)+\int_\Lambda \varphi^ud\mu\right) \\
&=& P_{f,\rm SP}(\Phi)  .
\end{eqnarray*}
Moreover, with  Theorem~\ref{theorem:heu} and with Proposition~\ref{lem:main} we obtain
\[
\sup_{\mu\in\cM_{\rm E}}\left(h_\mu(f)+\int_\Lambda \varphi^u d\mu\right)
=\sup_{n\ge 1}P_{f|\Lambda}\left(\frac{1}{n}\varphi_n\right)
= \sup_{n\ge 1}P_{f,\rm SP}\left(\frac{1}{n}\varphi_n\right) .
\]
This proves the theorem.
\end{proof}

We finally discuss slightly weaker hypotheses~(\ref{bedung}) and (\ref{bedung2}) under which (\ref{eq:www}) can be established: 
Let $f\colon M\to M$ be a $C^{1+\varepsilon}$ surface diffeomorphism and let $\Lambda\subset M$ be a compact locally maximal $f$-invariant set. Assume that
\begin{equation}\label{bedung}
	 \sup_{\mu\in\cM_{\rm E}} \frac{1}{n} \int_\Lambda \varphi_n \,d\mu
	 <  P_{f|\Lambda}\left(\frac{1}{n}\varphi_n\right) 
	 \,\,{\rm for}\,\,{\rm every }\,\,n\ge 1
\end{equation}
or
\begin{equation}\label{bedung2}
- \inf_{\mu\in\cM_{\rm E}}\sum_i\lambda_i(\mu)^+ 
< P_{f|\Lambda}\left(\frac{1}{n}\varphi_n\right)
 \,\,{\rm for}\,\,{\rm every }\,\,n\ge 1
\end{equation}
in which case, using~(\ref{ko}) 
 we can also conclude that for $n\ge 1$
\[
 \sup_{\mu\in\cM_{\rm E}} \frac{1}{n} \int_\Lambda \varphi_n \,d\mu
 \le - \inf_{\mu\in\cM_{\rm E}}\sum_i\lambda_i(\mu)^+
< 
P_{f|\Lambda}\left(\frac{1}{n}\varphi_n\right).
\]
This allows us to apply~\cite[Theorem 1]{GelWol:} to the diffeomorphism $f$ and the potential $\frac{1}{n}\varphi_n$. We obtain $P_{f,\rm SP}(\frac{1}{n}\varphi_n)=P_{f|\Lambda}(\frac{1}{n}\varphi_n)$ for every $n\ge 1$ and hence the  second equality in~(\ref{eq:www}). 
We only mention that similar hypothesis can be stated on the measures in $\cM^\infty_{\rm E}$ in order to establish the first equality in~(\ref{eq:www}), but will refrain from formulating them since it seems to be hard to control the total set of measures~$\cM^\infty_{\rm E}$ in general. 

\section{Proof of Theorem~\ref{Main}}\label{sec:2}

Consider $\mu\in \cM_{\rm E}$ hyperbolic with $h_\mu(f)>0$,  and let $0<\alpha<\chi(\mu)$. Extending some results by Katok (see e.g.~\cite[Chapter S.5]{KatHas:95}),  or using Mendoza~\cite{Men:88} in the case of a surface diffeomorphism and S\'anchez-Salas~\cite{San:02} in the case of a diffeomorphisms on a higher-dimensional manifold, we derive the  existence of a sequence $(\mu_n)_n$ of measures $\mu_n\in \cM_{\rm E}$ supported on hyperbolic horseshoes
$K_n(\mu)\subset M$ (see~\cite{KatHas:95} for the definition) such that $h_{\mu_n}(f)\to h_\mu(f)$, and $\chi(\mu_n)\to\chi(\mu)$. In particular, for each
$n\in\bN$ there exist $m,s\in\bN$ such that $f^m| K_n(\mu)$ is conjugate to the full shift in $s$ symbols. Since $\Lambda$ is a compact locally maximal $f$-invariant set, we can conclude that
$K_n(\mu)\subset \Lambda$ for all $n\in \bN$. 
The  following is a consequence of  the construction given in these papers:   One can construct the horseshoe $K_n(\mu)$ in such a way that $|\int\varphi^ud\mu + \log|\det df^m_x|_{E^u_x}|^{1/m}|$ is small for points  $x$ in the rectangles covering $K_n(\mu)$; and  the variation of $\log| \det df^m_x|_{E^u_x}|$ on the horseshoe $K_n(\mu)$ is bounded by a constant which only depends on the diameter of the rectangle cover of the Pesin set, and which tends to zero as $n\to\infty$. 
Then we can conclude that for every $0<\varepsilon<\chi(\mu)-\alpha$ there is a number $n=n(\varepsilon)\ge 1$ and a measure $\mu\in\cM_{\rm E}$ supported on $K_n(\mu)$ such that
\begin{equation}\label{nag}
  h_\mu(f)-\varepsilon < h_{\mu_n}(f) ,\quad
  \left| \int_\Lambda\varphi^ud\mu_n - \int_\Lambda\varphi^ud\mu\right| < \varepsilon.
\end{equation}
Moreover, there exists a number $0<c_0(n)\le1$ such that for
every periodic point $x\in K_n(\mu)$ and every $k\in\bN$ we have
\begin{eqnarray*}
 {\rm for}\,\,{\rm every }\,\,v\in E^u_x \,\,\,
  || df^k_x(v)|| &\ge & c_0(n)e^{k(\chi(\mu)-\varepsilon)}|| v||\\
  {\rm for}\,\,{\rm every }\,\,w\in E^s_x\,\,\,
  || df^{-k}_x(w)|| &\le& c_0(n)e^{k(-\chi(\mu)+\varepsilon)}||
  w||.
\end{eqnarray*}
This implies that for every $k\in\bN$
\begin{equation}\label{kir}
  \Fix(f^k)\cap K_n(\mu) \subset \SFix_{(f,\alpha,c_0)}(f^k).
\end{equation}
It follows from the above given approximation properties of the measures $\mu_n$ that 
\begin{equation}\label{marta}
h_\mu(f)+\int\varphi^ud\mu \le 
\sup_{n\ge 1}\left(h_{\mu_n}(f)+\int\varphi^ud\mu_n\right)
\le \sup_{n\ge 1} P_{f|K_n(\mu)}(\varphi^u).
\end{equation}
From~\cite[Proposition 20.3.3]{KatHas:95} we derive that for every $n\ge 1$
\[
P_{f|K_n(\mu)}(\varphi^u) = 
\lim_{k\to\infty}\frac{1}{k}\log\sum_{x\in\Fix(f^k)\cap K_n(\mu)}| \det df^k_x|_{E^u_x}|^{-1}.
\]
With~(\ref{kir}), for $0<\alpha<\chi(\mu)$ we conclude that
\begin{eqnarray*}
h_\mu(f)+\int\varphi^ud\mu &\le &
\limsup_{k\to\infty}\frac{1}{k}\log\sum_{x\in\SFix_{(f,\alpha,c_0(n))}(f^k)}| \det df^k_x|_{E^u_x}|^{-1}\\
&\le& \lim_{c\to 0}P_{f,\rm SP}(\varphi^u,\alpha,c) 
\le P_{f,\rm SP}(\varphi^u).
\end{eqnarray*}

Consider now a hyperbolic measure $\mu\in\cM_{\rm E}$ with $h_\mu(f)=0$. By~\cite[Theorem S.5.5]{KatHas:95}, for every $x\in{\rm supp}\,\mu$ and every $\delta>0$ there exists a hyperbolic periodic point $z$, say of period $m$, in any small neighborhood $V$ of $x$. In particular, one derives that  $|\varphi^u(z)+\log| \det df^m_x|_{E^u_x}|^{1/m}|$ is small. By local maximality of $\Lambda$ we conclude $z\in\Lambda$. From here we obtain for $0<\alpha<\chi(\mu)$
\[
\int\varphi^u d\mu \le \lim_{c\to 0}P_{f,\rm SP}(\varphi^u,\alpha,c)
\le P_{f,\rm SP}(\varphi^u).
\]

With the above we can conclude that 
\[
\sup_{\mu \in\cM_{\rm E}\,\,{\rm hyperbolic}}\left(h_\mu(f)+\int\varphi^ud\mu\right)
\le P_{f,\rm SP}(\varphi^u).
\]
It follows from~\cite[Theorem 4]{GelWol:} that
\[
\lim_{c\to0}P_{f,\rm SP}(\varphi^u,\alpha,c) \le
\sup\left(h_\nu(f)+\int\varphi^ud\nu\right),
\]
where the supremum is taken over all hyperbolic measures $\nu\in\cM_{\rm E}$ with $\alpha\le\chi(\nu)$. 

The remaining equality follows with  Theorem~\ref{theorem:heu}. This proves Theorem~\ref{Main}.

\end{document}